\numberwithin{equation}{section}
\newtheorem{thm}{Theorem}[section]
\newtheorem{lem}[thm]{Lemma}
\theoremstyle{definition}
\def \12 {{\frac{1}{2}}}
\def \x8n {\int_{8 \leq |x_1| \leq N_1}}
\def \y8n {\int_{8 \leq |x_2| \leq N_2}}
\def \z8n {\int_{8 \leq |x_3| \leq N_4}}
\def \xe8 {\int_{\epsilon_1 \leq |x_1| \leq 8}}
\def \ye8 {\int_{\epsilon_2 \leq |x_2| \leq 8}}
\def \ze8 {\int_{\epsilon_4 \leq |x_3| \leq 8}}
\newcommand{\charac}{\raisebox{2pt}{$\chi$}}
\begin{document}

\baselineskip 17.2pt \hfuzz=6pt

\title[Zygmund type and flag type maximal functions]
{Zygmund type and flag type maximal functions, \\ and sparse operators}

\bigskip

\author[Flores, G., Li, J.\ and Ward, L. ]{Guillermo J. Flores, Ji Li and Lesley A. Ward}

\address{Guillermo J. Flores, FI Univ.\ Cat\'olica de C\'ordoba, Av.\ Armada Argentina 3555 CP: X5016DHK
\& CIEM-FAMAF Univ.\ Nacional de C\'ordoba, Av.\ Medina Allende s/n Ciudad Univ. CP:X5000HUA,
C\'ordoba, Argentina.}
\email{gflores@famaf.unc.edu.ar}

\address{Ji Li, Department of Mathematics\\
Macquarie University\\ 
NSW, 2109, Australia.}
\email{ji.li@mq.edu.au}

\address{Lesley A. Ward, School of Information Technology and Mathematical Sciences, University of South Australia, 
Mawson Lakes SA 5095, Australia.  }
\email{lesley.ward@unisa.edu.au}

\thanks{The authors are supported by ARC DP 160100153}

\subjclass[2010]{42B20, 42B25}
\keywords{Zygmund dilations, Maximal functions, Sparse domination}

\begin{abstract}
We prove that the maximal functions associated with a Zygmund dilation {dyadic} structure in
three-dimensional Euclidean space, and with the flag {dyadic} structure in two-dimensional
Euclidean space, cannot be bounded by multiparameter sparse operators associated
with the corresponding dyadic grid. {We also obtain supplementary results about the absence of 
sparse domination for the strong dyadic maximal function.} 
\end{abstract}

\maketitle


\section{Introduction and statement of main results}

In recent years, it has been evidenced that \emph{Sparse Operators} play an important 
role in the weighted bounds for many singular integrals, see for example \cite{La1,Le,CCDO, CR}. Such techniques
have led to advances in sharp estimates within the Calder\'on-Zygmund theory.
The fundamental example is the sparse domination of the one-parameter dyadic maximal function 
\begin{align*}
M_d f (x) \vcentcolon = \sup_{Q \in \mathscr{D}^n \, : \, x \in Q} {1\over |Q|} \int_Q |f(x_1)| \, dx_1
\end{align*}
where the supremum is taken over all dyadic cubes in $\mathbb R^n$ containing $x$, that is,
\begin{align*}
M_d f (x) \leq C \sum_{Q \in \mathscr{S}} \left( {1\over |Q|} \int_Q f(x_1) \, dx_1 \right) \charac_Q(x),
\end{align*}
where $\mathscr{S}$ is a sparse collection of dyadic cubes.

Nevertheless, a remarkable recent result (Theorem A in \cite{BCOR}) shows that there is no sparse 
domination in the tensor product setting $\mathbb R^n\times \mathbb R^m$ for the strong dyadic maximal function
\begin{align*}
M_{sd} f (x,y) \vcentcolon = \sup_{R \in \mathscr{D}^{n} \times \mathscr{D}^{m} \, : \, (x,y) \in R} {1\over |R|} \int_R |f(x_1,y_1)| \, dx_1dy_1,
\end{align*}
where the supremum is taken over all dyadic rectangles with sides parallel to the axes containing $(x,y)$. This result suggests that
the sparse domination techniques in the one-parameter setting cannot be expected to work 
with the same approach in the multiparameter setting.

The classical Calder\'on--Zygmund
singular integrals are related to the one-parameter dilation
structure on $\Bbb R^n$, defined by $\delta_o \circ
(x_1,x_2,\ldots, x_n) \vcentcolon = (\delta x_1,\ldots,\delta x_n)$, with
$x\in\mathbb{R}^n$ and $\delta > 0$. Meanwhile the product
dilation structure is defined by $\delta_p \circ (x_1,x_2,\ldots, x_n) \vcentcolon =
(\delta_1 x_1,\ldots,\delta_n x_n)$, $\delta_i > 0$, $i =
1,\ldots,n$. The key difference is that $\delta_o$ maps cubes
to cubes, while $\delta_p$ maps cubes to rectangular prisms
whose side-lengths are independent. Multiparameter dilations
lie between these two extremes: the side-lengths need not be
equal nor be completely independent of each other, but may be mutually dependent. 

With these dilation structures in mind, it is natural to wonder whether it is possible to obtain certain 
sparse domination for multiparameter maximal functions which lie in between
the two extreme cases $M_d$ and $M_{sd}$.

One of the most natural and interesting examples of a group of dilations in $\Bbb R^3$ that lies in between 
the one-parameter and the full product setting is the so-called \emph{Zygmund dilation} defined by
$\rho_{s,t}(x_1,x_2,x_3) = (sx_1, tx_2, stx_3)$ for $s, t > 0$ (see for example \cite{RS,FP}). The 
maximal function corresponding to this Zygmund dilation is  
\begin{align}\label{zygmund maximal}
{\mathcal M}_{\frak z} f (x_1,x_2,x_3) \vcentcolon = \sup_{R \, : \, (x_1,x_2,x_3) \in R} \frac{1}{|R|}\int_{R}|f(u_1,u_2,u_3)| \, du_1du_2du_3,
\end{align}
where the supremum above is taken over all rectangles in $\Bbb R^3$ with edges parallel
to the axes and side-lengths of the form $s, t$, and $st$ (see \cite{Co}). See also \cite{So} for a discussion
of the Zygmund conjecture about the differentiation properties of $k$-parameter bases of rectangular 
prisms in $\mathbb{R}^n$, and \cite{DLOWY2} for the Zygmund type singular integrals and their commutators. The survey paper of R. Fefferman \cite{Fe1} has more information 
about research directions in this setting.

Another very important example in the multiparameter setting is the implicit flag structure.
To be precise, in \cite{MRS,MRS2}, M\"{u}ller, Ricci and Stein studied Marcinkiewicz 
multipliers on the Heisenberg group $\mathbb H^n$  associated with the sub-Laplacian on 
$\mathbb H^n$ and the central invariant vector field, and obtained the $L^p$-boundedness for $1<p<\infty$. 
This is surprising since these multipliers are invariant under a two-parameter
group of dilations on $\mathbb{C}^{n}\times \mathbb{R}$, while there is
\emph{no} two-parameter group of \emph{automorphic} dilations on $\mathbb{H}
^{n}$. Moreover, they showed that Marcinkiewicz multipliers can be characterized by
a convolution operator of the form $f\ast K$ where $K$ is a \emph{flag} convolution kernel, 
which satisfies size and  smoothness conditions lying in between the one-parameter and 
product singular integrals. The complete flag Hardy space theory and the boundedness of the iterated commutator was obtained only recently in \cite{HLLW} and \cite{DLOWY}, respectively.
The fundamental tool in this setting is the  flag maximal function. We state the definition in $\mathbb R\times \mathbb R$
for the sake of simplicity: 
\begin{align}\label{flag maximal}
{\mathcal M}_{\text {flag}} f (x_1,x_2) \vcentcolon = \sup_{R \, : \, (x_1,x_2) \in R}\frac{1}{|R|}\int_{R}|f(u_1,u_2)|du_1du_2,
\end{align}
where the supremum above is taken over all rectangles in $\Bbb R^2$ with edges parallel
to the axes and side-lengths of the form $s$ and $t$ satisfying
$s\leq t$.

{The dyadic versions of the Zygmund maximal function and flag maximal function can be defined easily by restricting to 
dyadic axis-parallel rectangles in \eqref{zygmund maximal} and \eqref{flag maximal}.
We denote them by ${\mathcal M}_{\frak z,d}$ 
and ${\mathcal M}_{{\rm flag},d}$, respectively.}

In this article we show that the maximal functions $ {\mathcal M}_{\frak z,d} $ and $ {\mathcal M}_{{\rm flag},d} $ cannot be bounded 
by multiparameter sparse operators associated with the corresponding dyadic grid. We state these results as 
Theorems \ref{Thm Z} and \ref{Thm F}, respectively.
\begin{thm}\label{Thm Z}
Take $r, s \geq 1$ such that $1/r + 1/s > 1$. Then for every $C>0$ and $\eta \in (0,1)$
there exist integrable functions $f$ and $g$, compactly supported and bounded, such that
$$\big|\langle \mathcal M_{\mathfrak z,d} f,g \rangle\big| \geq
C \sum_{R\in \mathscr{S}_{\mathfrak z}} \langle |f| \rangle_{R,r} \langle |g| \rangle_{R,s}  |R|,$$
for all $\eta$-sparse collections $\mathscr{S}_{\mathfrak z}$ of Zygmund dyadic {edge-parallel} rectangles. 
\end{thm}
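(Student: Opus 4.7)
The plan is to adapt the counterexample strategy of \cite{BCOR} (used there for the strong dyadic maximal function in the tensor-product setting) to the Zygmund dyadic grid. Fix a large integer $n$ to be chosen in terms of $C$ and $\eta$, and set
\[
A := [0, 2^{-n}] \times [0, 2^{-n}] \times [0, 2^{-2n}], \qquad f := \chi_A,
\]
which is the dyadic Zygmund ``cube'' at scale $(n,n)$. For $j = 0, 1, \ldots, n$ define the dyadic Zygmund rectangle $R_j := [0, 2^{-j}] \times [0, 2^{-(n-j)}] \times [0, 2^{-n}]$, of volume $2^{-2n}$ and containing $A$; then set
\[
B := \bigcup_{j=0}^n R_j, \qquad g := \chi_B.
\]

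For the lower bound on the pairing, since $A \subseteq R_j$ we have $\mathcal{M}_{\mathfrak z,d} f(x) \geq |A|/|R_j| = 2^{-2n}$ for every $x \in R_j$, hence pointwise on $B$. A direct computation, using that the ``new mass'' $|R_j \setminus \bigcup_{i<j} R_i|$ equals $2^{-2n-1}$ for each $j \geq 1$, gives $|B| \asymp n \cdot 2^{-2n}$. Therefore
\[
\big|\langle \mathcal{M}_{\mathfrak z,d} f, g \rangle\big| \;\geq\; 2^{-2n}\, |B| \;\gtrsim\; n \cdot 2^{-4n},
\]
a logarithmic blow-up over the ``trivial'' scale $|A| = 2^{-4n}$.

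For the upper bound on the sparse form, set $\alpha := 1/r + 1/s - 1 > 0$. Then for any $\eta$-sparse collection $\mathscr{S}_{\mathfrak z}$,
\[
\sum_{R \in \mathscr{S}_{\mathfrak z}} \langle |f| \rangle_{R,r}\, \langle |g| \rangle_{R,s}\, |R| \;=\; \sum_{R \in \mathscr{S}_{\mathfrak z}} |A \cap R|^{1/r}\, |B \cap R|^{1/s}\, |R|^{-\alpha}.
\]
I would split the sum by the dyadic Zygmund scale $(a,b) \in \mathbb{Z}^2$ of $R$ and by the relative position of $R$ with respect to $A$ and $B$ (three regimes: $R \subseteq A$, $R \supseteq A$, and partial overlap). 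Using $\eta$-sparseness to bound $\sum_R |R|$ at each scale by $\eta^{-1}$ times the measure of the ambient overlap set, and exploiting the negative exponent $-\alpha < 0$ to obtain an absolutely convergent geometric series over the two-parameter scale lattice, the target estimate is
\[
\sum_{R \in \mathscr{S}_{\mathfrak z}} \langle |f| \rangle_{R,r}\, \langle |g| \rangle_{R,s}\, |R| \;\leq\; C_{\eta, r, s} \cdot 2^{-4n},
\]
\emph{without} the logarithmic factor $n$. Comparing the two bounds, the ratio grows like $c_{\eta,r,s}\cdot n$, so choosing $n$ large enough that $c_{\eta,r,s}\cdot n > C$ yields the theorem.

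The main obstacle is the sparse-form upper bound: the Zygmund scale space is genuinely two-parameter $(a,b)$, and one must verify that $\eta$-sparseness still suffices to suppress any logarithmic factor in $n$. This is exactly where the strict inequality $1/r + 1/s > 1$ enters, producing the strictly negative exponent on $|R|$ which, paired with the sparse disjointness, closes the geometric sums over the $(a,b)$-lattice in the same manner as in the tensor-product case of \cite{BCOR}. A secondary technical issue is that dyadic Zygmund rectangles of different scales do not form a nested family, so the scale decomposition must be organized by $(a,b)$ rather than by a linearly ordered filtration.
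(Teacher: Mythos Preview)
Your lower bound is fine, but the claimed sparse upper bound is false, and this is not a technical gap: the construction cannot work. Take the very rectangles $R_0,\dots,R_n$ you used to build $B$. Since $|R_j\setminus\bigcup_{i<j}R_i|=\tfrac12|R_j|$, the family $\{R_j\}_{j=0}^{n}$ is itself $\tfrac12$-sparse (and an arithmetic sub-progression of it is $\eta$-sparse for any $\eta<1$). On each $R_j$ one has $A\subset R_j\subset B$, hence
\[
\langle f\rangle_{R_j,r}=\bigl(|A|/|R_j|\bigr)^{1/r}=2^{-2n/r},\qquad \langle g\rangle_{R_j,s}=1,
\]
so the sparse form over this collection equals $(n+1)\,2^{-2n(1+1/r)}$. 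For $r=1$ this already matches your pairing lower bound $\asymp n\,2^{-4n}$, and for $r>1$ it is strictly larger. Thus the ratio $\big|\langle \mathcal M_{\mathfrak z,d} f,g\rangle\big|\big/\sup_{\mathscr S_{\mathfrak z}}\sum_R\cdots$ is bounded (and in fact $\le 1$ up to constants), so no choice of $n$ produces the required blow-up.

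The underlying reason is that concentrating all the mass of $f$ at a single small cube $A$ lets the adversary pick exactly the testing rectangles $R_j$ as its sparse family, and every one of them sees the \emph{entire} mass of $A$. This is also not what \cite{BCOR} does: their $f$ is the (measure associated with the) equidistributed point set $\mathcal P$, designed so that any dyadic rectangle of volume $\ge 2^{-2m}$ contains only its ``fair share'' of the mass. That equidistribution is what forces $\langle f\rangle_{R,r}$ to be small uniformly over all candidate sparse rectangles, killing the logarithm on the sparse side while retaining it in the pairing. The paper's proof follows exactly this route: it lifts the \cite{BCOR} sets $\mathcal P$, $\mathcal Z$ to Zygmund point configurations $\mathcal P_{\mathfrak z}$, $\mathcal Z_{\mathfrak z}\subset[0,1)^3$ (stacking copies at heights $j2^{-2m}$), and the resulting counting estimates (Lemma~\ref{main lemma zygmund}) give a pairing $\gtrsim 2^k$ versus a sparse form $\lesssim \eta^{-1}k^{1/s}$, producing the required blow-up.
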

Here we are denoting the $L^r$-average of a function $f$ over a rectangle $R$ by 
\begin{align*}
\langle |f| \rangle_{R,r} \vcentcolon = \left( \frac{1}{|R|} \int_R |f|^r \right)^{1/r} , \quad \hbox{for } r \geq 1 .
\end{align*}
\begin{thm}\label{Thm F}
Take $r, s \geq 1$ such that $1/r + 1/s > 1$. Then for every $C>0$ and $\eta \in (0,1)$
 there exist integrable functions $f$ and $g$, compactly supported and bounded, such that
 \begin{align*}
 \big|\langle \mathcal M_{{\rm flag},d} f,g \rangle\big| \geq 
C \sum_{R\in \mathscr{S}_{\rm flag}} \langle |f| \rangle_{R,r} \langle |g| \rangle_{R,s} |R|,
 \end{align*}
for all $\eta$-sparse collections $\mathscr{S}_{\rm flag}$ of flag dyadic {edge-parallel} rectangles. 
\end{thm}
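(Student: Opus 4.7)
The plan is to adapt the proof of Theorem \ref{Thm Z}, replacing Zygmund dyadic rectangles in $\mathbb{R}^3$ (of shape $(s,t,st)$) with flag dyadic rectangles $I\times J$ in $\mathbb{R}^2$ (subject to $|I|\leq |J|$). The underlying mechanism is unchanged: construct a Kakeya/Rademacher-type family of admissible rectangles of common area for which the flag maximal function is large on a set of comparable measure, while any sparse cover is forced to be inefficient because of the constraint $1/r+1/s>1$.

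Concretely, I would fix a large integer $N$ (to be chosen depending on $C$, $\eta$, $r$, $s$) and select, for each $k = 0,1,\ldots,N$, a flag dyadic rectangle $R_k = I_k \times J_k$ of area $2^{-N}$ but variable aspect, for example $|I_k|=2^{-N}$ and $|J_k|=2^{-(N-k)}$. Arrange the $R_k$ so that designated subrectangles $E_k \subset R_k$ are pairwise disjoint, take $f = \chi_A$ with $A = \bigcup_k E_k$, and produce an auxiliary set $B = \mathrm{supp}\, g$ on which $\mathcal M_{\mathrm{flag},d} f \gtrsim 1$ pointwise, arranged so that $|B|$ dominates $|A|$ by a large factor depending on $N$. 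This yields the lower bound $|\langle \mathcal M_{\mathrm{flag},d} f, g\rangle| \gtrsim |B|$.

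For any $\eta$-sparse family $\mathscr{S}_{\mathrm{flag}}$, each summand admits
\[
\langle |f|\rangle_{R,r}\langle |g|\rangle_{R,s}|R| = |R\cap A|^{1/r}\,|R\cap B|^{1/s}\,|R|^{1-1/r-1/s},
\]
and since $1/r+1/s>1$ the exponent $1-1/r-1/s$ is strictly negative. A H\"older/pigeonhole analysis, combined with $\eta$-sparseness (so that the selector sets $E_R\subset R$ with $|E_R|\geq \eta |R|$ are pairwise disjoint), controls the full sparse sum by a quantity of the form $C(\eta,r,s)\,|A|^{\alpha}|B|^{1-\alpha}$ with $\alpha>0$. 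Since $|A|/|B|\to 0$ as $N\to\infty$, choosing $N$ sufficiently large defeats the candidate constant $C$ and the prescribed sparseness parameter $\eta$.

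The principal difficulty lies in executing this dichotomy \emph{inside} the flag family $\{I\times J : |I|\leq |J|\}$: the construction of $A$ must be rich enough that $\mathcal M_{\mathrm{flag},d} f$ is large on a sizeable $B$ via genuinely admissible (flag) rectangles, yet restricted enough that no flag rectangle from an arbitrary sparse collection can accumulate a large fraction of $A$ without also accumulating proportionally large area. Bookkeeping of the single-inequality constraint $|I|\leq |J|$ throughout both directions of the estimate is the delicate step, but it is in fact markedly simpler than maintaining the rigid Zygmund relation $(s,t,st)$ in three dimensions; consequently the argument should follow the Zygmund case almost line-for-line, with the verification that the preferred rectangles $R_k$ and their enlargements all satisfy the flag condition being essentially immediate.
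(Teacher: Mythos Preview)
Your outline diverges from the paper's argument and, as written, has real gaps.

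\medskip
\textbf{What the paper does.} The paper does \emph{not} use characteristic functions $\chi_A$, $\chi_B$ at all. It takes the point-mass configurations $\mathcal{P}$ and $\mathcal{Z}$ of \cite{BCOR} verbatim, observes that $\mathcal{P}$ is already built from dyadic \emph{squares} and hence is flag-compatible, and then restricts the construction of $\mathcal{Z}$ to those dyadic rectangles of area $2^{-2m-2}$ that satisfy $|I|\le |J|$, obtaining $\mathcal{Z}_{\rm flag}$. One then reruns the proof of Theorem~\ref{ZD} with $\mu=(\sharp\mathcal{P})^{-1}\sum_{p\in\mathcal{P}}\delta_p$ and $\nu=(\sharp\mathcal{Z}_{\rm flag})^{-1}\sum_{z\in\mathcal{Z}_{\rm flag}}\delta_z$: the lower bound is $\langle \mathcal{M}_{{\rm flag},d}\,\mu,\nu\rangle\gtrsim 2^{k}$ (exponential in $k$), while the sparse sum is $\lesssim \eta^{-1}k^{1/s}$ via the size dichotomy of Lemma~\ref{main lemma zygmund}(d)(i)--(ii). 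The tension is exponential versus polynomial in the auxiliary parameter $k$, not a ratio $|B|/|A|$.

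\medskip
\textbf{Where your plan breaks.} First, the explicit family you propose is not the right object: with $|I_k|=2^{-N}$ and $|J_k|=2^{-(N-k)}$ the areas are $2^{-(2N-k)}$, not constant, and since all $I_k$ coincide the whole construction lives in a single thin strip; there is no mechanism producing a set $B$ with $|B|/|A|\to\infty$ on which $\mathcal{M}_{{\rm flag},d}\chi_A\gtrsim 1$. The genuine ``Rademacher-type'' construction you allude to \emph{is} precisely the $\mathcal{P}$/$\mathcal{Z}$ machinery of \cite{BCOR}, and it is substantially more intricate than a one-parameter family of rectangles.

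Second, and more seriously, the assertion that a ``H\"older/pigeonhole analysis'' yields
\[
\sum_{R\in\mathscr{S}_{\rm flag}} |R\cap A|^{1/r}\,|R\cap B|^{1/s}\,|R|^{1-1/r-1/s}\ \lesssim_{\eta,r,s}\ |A|^{\alpha}|B|^{1-\alpha}
\]
is exactly the hard step, and you have not indicated how to prove it. Because the exponent $1-1/r-1/s$ is \emph{negative}, small rectangles containing mass from both $A$ and $B$ are dangerous, and no convexity/H\"older trick handles them for free. In the paper this is dealt with by the counting estimates of Lemma~\ref{main lemma zygmund}(d)(ii): one knows, for the specific sets $\mathcal{P}$ and $\mathcal{Z}_{\rm flag}$, exactly how many points a small flag rectangle can capture, and this is what makes the sparse sum collapse. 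Without an analogue of that counting lemma for your $A$ and $B$, the upper bound on the sparse form is unproved.
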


Also, we show that the strong dyadic maximal function $M_{sd}$ does not admit $(r,s)$-sparse domination for certain $r$ and $s$, in the following result.
\begin{thm}\label{Thm ZS}
Take $r, s \geq 1$ such that $1/r + 1/s > 1$. Then for every $C>0$ and $\eta \in (0,1)$
there exist integrable functions $f$ and $g$, compactly supported and bounded, such that
$$\big|\langle M_{sd} f,g \rangle\big| \geq
C \sum_{R\in \mathscr{S}} \langle |f| \rangle_{R,r} \langle |g| \rangle_{R,s}  |R|,$$
for all $\eta$-sparse collections $\mathscr{S}$ of dyadic {edge-parallel} rectangles. 
\end{thm}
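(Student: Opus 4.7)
The strategy is to produce explicit counter-examples along the same lines as the proofs of Theorems \ref{Thm Z} and \ref{Thm F} and the $r=s=1$ case treated in \cite{BCOR}. For any given $C>0$ and $\eta \in (0,1)$, and for $N$ large enough depending on $C, \eta, r, s$, the plan is to build bounded and compactly supported $f_N, g_N$ so that $\langle M_{sd} f_N, g_N\rangle$ grows like a polynomial in $N$ times the natural scale (reflecting the $L\log L$ failure of the strong maximal function), while $\sup_{\mathscr{S}} \sum_{R \in \mathscr{S}} \langle |f_N|\rangle_{R,r}\langle |g_N|\rangle_{R,s} |R|$ decays strictly faster in $N$, uniformly over all $\eta$-sparse collections $\mathscr{S}$ of dyadic edge-parallel rectangles.

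A natural model (working in $\mathbb{R}^{2}$ for concreteness; the general case $\mathbb{R}^n \times \mathbb{R}^m$ is analogous) is to set $f_N = \chi_{Q_0}$ with $Q_0$ a small dyadic cube of side $2^{-N}$, and to take $g_N = \sum_{a,b} c_{a,b}\,\chi_{B_{a,b}}$ as a sum of indicator functions on small dyadic cubes $B_{a,b}$ indexed by $1 \leq a, b \leq N$, each $B_{a,b}$ positioned at the outer corner of the axis-parallel rectangle $R_{a,b} = [0, 2^{a-N}) \times [0, 2^{b-N})$, all of which contain $Q_0$. For every $(x,y) \in B_{a,b}$ the rectangle $R_{a,b}$ witnesses $M_{sd} f_N(x,y) \geq |Q_0|/|R_{a,b}| = 2^{-(a+b)}$, and integrating against $g_N$ yields a lower bound on $\langle M_{sd} f_N, g_N\rangle$ that carries the anticipated polynomial-in-$N$ factor from the two-dimensional sum over $(a,b)$.

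For the sparse sum upper bound, the key observations are: first, only $R \in \mathscr{S}$ containing $Q_0$ contribute nontrivially (since $\mathrm{supp}(f_N) = Q_0$); second, such rectangles form a sub-poset of the nested tree $\{R_{a,b}: 0 \leq a, b \leq N\}$; and third, the $\eta$-sparsity condition controls the aggregate measure and hence how many scales can be represented simultaneously. Combining these with the strict inequality $1 - 1/r - 1/s < 0$ (equivalent to $1/r + 1/s > 1$), which gives a negative exponent on $|R|$ in each term of the sparse form so that large rectangles contribute only weakly, I expect to reach a uniform upper bound on the sparse sum that decays in $N$.

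The hard part is this final balance, particularly when $r > 1$ or $s > 1$. In that case the $L^r$-average $\langle f_N\rangle_{R,r} = (|Q_0|/|R|)^{1/r}$ on a big rectangle is substantially larger than the $L^1$-average, and can inflate the contribution from a single large rectangle beyond what the polynomial lower bound can absorb. The remedy, as in the Zygmund and flag cases, will be to spread $\mathrm{supp}(f_N)$ across several dyadic scales so that $\langle f_N\rangle_{R,r}$ remains suitably small on big rectangles, or to weight the $B_{a,b}$ so that $\langle g_N\rangle_{R,s}$ compensates. Once the parameters are tuned, the remaining technical task is to check that the sparse sum upper bound holds for \emph{every} $\eta$-sparse collection, not merely for the nested ones following the tree structure; this uniform control, which ultimately uses the sparsity hypothesis together with the negative exponent $1 - 1/r - 1/s$, will be the technical core of the proof.
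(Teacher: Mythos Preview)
Your concrete model with $f_N=\chi_{Q_0}$ does not work, and the failure is not confined to the regime $r>1$ or $s>1$: it already collapses at $r=s=1$. With $Q_0=[0,2^{-N})^2$ and $g_N=\sum_{a,b}c_{a,b}\chi_{B_{a,b}}$, the nested family $\{R_{a,b}=[0,2^{a-N})\times[0,2^{b-N}):1\le a,b\le N\}$ is itself $\tfrac14$-sparse (take $E_{R_{a,b}}=R_{a,b}\setminus(R_{a-1,b}\cup R_{a,b-1})$), and for this single collection one computes
\[
\sum_{a_0,b_0}\langle f_N\rangle_{R_{a_0,b_0}}\langle g_N\rangle_{R_{a_0,b_0}}|R_{a_0,b_0}|
=\sum_{a_0,b_0}\frac{|Q_0|}{|R_{a_0,b_0}|}\sum_{a\le a_0,\,b\le b_0}c_{a,b}|B_{a,b}|
\ \ge\ \sum_{a,b}\frac{|Q_0|}{|R_{a,b}|}\,c_{a,b}|B_{a,b}|,
\]
which is exactly your lower bound for $\langle M_{sd}f_N,g_N\rangle$. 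No choice of weights $c_{a,b}$ produces a gap; the ``polynomial-in-$N$ factor'' you anticipate simply does not appear, because $\sum_{a,b}2^{-(a+b)}$ is bounded and any rescaling of the $c_{a,b}$ is matched term-by-term on the sparse side. The difficulty you flag for $r,s>1$ is real, but it is secondary to this more basic obstruction.

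The missing idea is precisely the one that drives the Zygmund and flag cases in this paper and the $r=s=1$ case in \cite{BCOR}: one must take $f$ supported on the BCOR point set $\mathcal P$ of $2^{2m+1}$ points that are pairwise $2^{-m}$--separated in the dyadic distance $d_{\mathscr D}$, and $g$ supported on the companion set $\mathcal Z$. The separation guarantees that any small rectangle sees at most one point of $\mathcal P$, while large rectangles see the correct proportion; it is this dichotomy (items (d)-$(i)$ and (d)-$(ii)$ in the lemma preceding Theorem~\ref{ZD}) that forces every sparse collection to miss most of the pairing. ``Spreading $\operatorname{supp} f_N$ across several dyadic scales'' is not a tuning step but the entire content of the argument, and it cannot be replaced by a single cube. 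The paper's proof of Theorem~\ref{Thm ZS} is exactly this: rerun Proposition~2.6 and Theorem~4.5 of \cite{BCOR} with the $L^r$- and $L^s$-averages inserted as in the proof of Theorem~\ref{ZD}(b), using the same $\mathcal P$ and $\mathcal Z$.
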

This provides a supplementary explanation to the main result of \cite{BCOR}.


We remark that there are no direct implications among the previously stated theorems. For instance, in two-parameter
setting $M_{sd}$ is greater than $\mathcal M_{{\rm flag},d}$. {But the sums over the sparse collections of flag dyadic rectangles
involved in Theorem \ref{Thm F} do not have a direct comparison with the sums over the sparse collections of dyadic rectangles
involved in Theorem \ref{Thm ZS}. A similar situation occurs for $M_{sd}$ and $\mathcal M_{\mathfrak z,d}$.}

The paper is organised as follows. In Section \ref{Zygmund} we give notation 
and some key results, which we  then use to prove Theorem \ref{Thm Z}. In Section \ref{Flag} we prove Theorem \ref{Thm F},
and in Section \ref{strong} we prove Theorem \ref{Thm ZS}.

\section{Zygmund dilation dyadic structures}
\label{Zygmund}
\subsection{Notation and proof of Theorem \ref{Thm Z}}
As usual, the collection $\mathscr{D}$ of dyadic intervals in $\mathbb{R}$ is defined by
\begin{align*} 
\mathscr{D} = \big\{ R \subset \mathbb{R} \, : \, R = \big[ k 2^{-j} , (k+1) 2^{-j} \big) \hbox{ for } k, \, j \in \mathbb{Z} \big\} .
\end{align*}
{Then, we shall denote by $\mathscr{D}_{\mathfrak{z}} $ the collection of
all \emph{Zygmund dyadic rectangles} $R=I\times J\times S$ in $\mathscr{D}^3$, that is, those 
$R\in \mathscr{D}^3$ such that $|S|=|I|\cdot |J|$.}

We shall evaluate
$ {\mathcal M}_{\frak z,d} $ on finite sums of special point masses. 

In general, given a locally integrable function $f$ we can define the associated measure $ \mu_f $ by $\mu_f (E):= \int_E |f|$
for every measurable set $E$. Then
\begin{align*}
{\mathcal M}_{\frak z,d} f (x) \vcentcolon = 
\sup_{R \in \mathscr{D}_{\mathfrak{z}} \, : \, x \in R} \frac{1}{|R|} \int_{R} |f| = 
\sup_{R \in \mathscr{D}_{\mathfrak{z}}  \, : \, x \in R} \frac{\mu_f (R)}{|R|} = \vcentcolon {\mathcal M}_{\frak z,d} \, \mu_f \, (x) .
\end{align*} 
Now, if $ \mathcal{F} $ is a finite set in $\Bbb R^n$ we define the finite sum $\mu$ of point masses associated with $ \mathcal{F} $ by
\begin{align*}
\mu \, \vcentcolon = \, \frac{1}{\sharp \mathcal{F}} \sum_{p \in \mathcal{F}} \delta_p ,
\end{align*}
where $ \sharp \mathcal{F} $ denotes the number of points in $ \mathcal{F} $ 
and $\delta_p$ denotes the single point mass concentrated at $p$. Then we naturally make sense of
\begin{align*}
& {\mathcal M}_{\frak z,d} \, \mu \, (x) \, \vcentcolon = \, \frac{1}{\sharp \mathcal{F}} \, 
\sup_{R \in \mathscr{D}_{\mathfrak{z}} \, : \, x \in R} \frac{1}{|R|} \sum_{p \in \mathcal{F}} \delta_p (R) ,
\qquad \langle f, \mu \rangle \vcentcolon = \int f \, d \mu = \frac{1}{\sharp \mathcal{F}} \sum_{p \in \mathcal{F}} f(p), 
\end{align*}
and
\begin{align*}
\langle \mu \rangle_{R,r} \vcentcolon = \, \frac{1}{\sharp \mathcal{F}} \bigg( \frac{1}{|R|} \sum_{p \in \mathcal{F}} \delta_p (R) \bigg)^{1/r} 
\end{align*}
for every rectangle $R$ and $1 \leq r < \infty$.

Next, given $\eta$ with $0 < \eta < 1$, we shall say that a collection $ \mathscr{S} $ of sets of finite measure 
(usually, rectangles or even dyadic {edge-parallel} rectangles) is called $\eta$-\emph{sparse}, if for each $R \in \mathscr{S}$
there is a subset $E_R \subset R$ such that $ |E_R| \geq \eta |R| $, and the collection $\{ E_R \}$ is pairwise disjoint.
Then, for $r, s \geq 1$, we say that an operator $T$ \emph{admits an $(r,s)$ $\eta$-sparse domination} if 
\begin{align*}
\big|\langle T f,g \rangle\big| \leq
C \sum_{R\in \mathcal S } \langle |f| \rangle_{R,r} \langle |g| \rangle_{R,s} |R|,
\end{align*}
for every pair of functions $f$ and $g$ sufficiently nice in the given context.

{We are now in position to state the following key result from which we will deduce Theorem \ref{Thm Z}
as a corollary.} We are denoting by $C$ and $c$ positive constants,  not necessarily 
the same at each occurrence.
\begin{thm}\label{ZD}
Let $r, s \geq 1$ such that $1/r + 1/s > 1$. Then for every natural number $k$ and for every $\eta \in (0,1)$
there exist finite sums $\mu_k$ and $\nu_k$ of point masses in $\mathbb{R}^3$ such that
\begin{itemize}
\item[(a)] $ \displaystyle \langle {\mathcal M}_{\frak z,d} \, \mu_k \, , \, \nu_k \rangle \, \geq \, c \, 2^k $, and
\item[(b)] $ \displaystyle \sum_{R\in \mathscr{S}_{\frak z}} \langle \mu_k \rangle_{R,r} \langle \nu_k \rangle_{R,s}  |R| 
\leq \frac{C}{\eta} \, k^{1/s} \left( 1 + \frac{1}{k} \right) $ for all $\eta$-sparse collections $\mathscr{S}_{\mathfrak z}$ 
of Zygmund dyadic rectangles. 
\end{itemize}
\end{thm}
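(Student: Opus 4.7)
The strategy is to construct $\mu_k$ and $\nu_k$ as normalized sums of finitely many Dirac atoms on subsets $F_\mu,F_\nu\subset\mathbb R^3$ that exploit the two-parameter Zygmund dilation structure (side-lengths $2^{-a},2^{-b},2^{-(a+b)}$). A natural candidate places atoms along a ``Zygmund staircase'' on the surface $\{x_3=x_1x_2\}$---for example $p_j=(2^{-j},2^{-(k-j)},2^{-k})$ for $j=0,\dots,k$, after shifting into the interiors of dyadic cells---and chooses $F_\nu$ in companion positions so that each atom $q$ of $\nu_k$ shares a minimal-volume Zygmund dyadic rectangle $R_q$ (of volume $\sim 2^{-2k}$) with at least one atom of $\mu_k$. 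Part (a) then follows from this geometric setup: one has $\mathcal{M}_{\mathfrak{z},d}\mu_k(q)\ge\mu_k(R_q)/|R_q|$ pointwise at every atom $q$ of $\nu_k$, and summing these lower bounds against $\nu_k$ yields $\langle\mathcal{M}_{\mathfrak{z},d}\mu_k,\nu_k\rangle\gtrsim c\cdot 2^k$ after a suitable choice of the normalizing cardinalities.

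For part (b), expand the sparse sum as
\[
\sum_{R\in\mathscr{S}_{\mathfrak{z}}}\langle\mu_k\rangle_{R,r}\langle\nu_k\rangle_{R,s}|R|\;=\;\frac{1}{|F_\mu|\,|F_\nu|}\sum_{R}(n_R^\mu)^{1/r}(n_R^\nu)^{1/s}|R|^{1-\beta},\qquad\beta:=\tfrac{1}{r}+\tfrac{1}{s}>1,
\]
where $n_R^\mu,n_R^\nu$ denote the numbers of $F_\mu,F_\nu$ atoms lying in $R$. I would bound this by partitioning $\mathscr{S}_{\mathfrak{z}}$ according to the dyadic scale parameters $(a,b)$ of its Zygmund rectangles: at each scale the $\eta$-sparseness---via the disjointness of the major parts $E_R\subset R$ with $|E_R|\ge\eta|R|$---caps the aggregate volume of rectangles appearing, while the geometric arrangement of $F_\mu,F_\nu$ restricts the admissible values of $n_R^\mu,n_R^\nu$ in a scale-dependent fashion. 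Combining these two ingredients and summing over the $O(k)$ active scale pairs produces the polynomial bound $\tfrac{C}{\eta}\,k^{1/s}(1+k^{-1})$.

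The principal difficulty lies in executing (b) uniformly over \emph{all} $\eta$-sparse collections. Because $1-\beta<0$, individual Zygmund dyadic rectangles of minimal volume contribute terms of order $2^{2k(\beta-1)}/(|F_\mu|\,|F_\nu|)$, which are exponentially large in $k$ before normalization; the configuration of $F_\mu,F_\nu$ must therefore be tuned so that either the normalizations $|F_\mu|,|F_\nu|$ absorb this blow-up, or the Zygmund constraint $|S|=|I|\,|J|$ combined with the disjointness of $\{E_R\}$ prevents many such minimal-volume rectangles from coexisting in a single sparse collection---all while preserving the pairing lower bound of (a). Calibrating the atom configuration so that these competing requirements operate simultaneously is the technical heart of the argument.
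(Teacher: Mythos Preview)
Your outline captures the right shape of the argument for part~(a), but the specific ``Zygmund staircase'' configuration $p_j=(2^{-j},2^{-(k-j)},2^{-k})$, $j=0,\dots,k$, cannot deliver~(b), and you have in fact already identified the reason without resolving it. With $|F_\mu|,|F_\nu|\sim k$, each of the $k$ minimal Zygmund rectangles $R_j$ (volume $\sim 2^{-2k}$, containing one atom of each set) contributes
\[
\langle\mu_k\rangle_{R_j,r}\langle\nu_k\rangle_{R_j,s}|R_j|\;\sim\;\frac{1}{k^2}\,2^{2k/r}\,2^{2k/s}\cdot 2^{-2k}\;=\;\frac{2^{2k(\beta-1)}}{k^2},
\]
and since these $R_j$ are pairwise disjoint (the third coordinate is common but the first two are not), all $k$ of them may sit simultaneously in an $\eta$-sparse collection. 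Their total is $\sim k^{-1}2^{2k(\beta-1)}$, which blows up exponentially. Neither the $O(k)$ normalizations nor the sparseness constraint can rescue this; sparseness only limits total \emph{volume}, and here the volume is tiny while the averages are huge.

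The paper's construction avoids this by working at a vastly larger scale: one sets $m=k\,2^{6k}$ and builds $F_\mu=\mathcal P_{\mathfrak z}$ and $F_\nu=\mathcal Z_{\mathfrak z}$ by stacking $2^{2m}$ horizontal copies of the two-dimensional sets $\mathcal P$ and $\mathcal Z$ from \cite{BCOR}, so that $\sharp\mathcal P_{\mathfrak z}=2^{4m+1}$ and $\sharp\mathcal Z_{\mathfrak z}\gtrsim m\,2^{4m}$. The combinatorial structure of $\mathcal P,\mathcal Z$ (Lemma~\ref{main lemma zygmund}) guarantees that any small Zygmund rectangle meeting both sets has $\sharp(R\cap\mathcal P_{\mathfrak z})\lesssim 2^k$, $\sharp(R\cap\mathcal Z_{\mathfrak z})\lesssim k2^k$, and $|R|\gtrsim 2^{-2(2m+k)}$; plugging these into the averages gives a factor $2^{3k\beta}/m$, and the choice $m=k2^{6k}$ (using $\beta\le 2$) turns this into $O(1/k)$. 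Finally, the summation over the sparse collection is not organized by the scale pair $(a,b)$ but by the ratio $|R\cap[0,1)^3|/|R|\in[2^{-j-1},2^{-j})$: all such $R$ lie in the superlevel set $\{\mathcal M_{\mathfrak z,d}\chi_{[0,1)^3}>2^{-j}\}$, whose measure is $\lesssim j2^j$ by the $L\log L$ weak-type bound for $\mathcal M_{\mathfrak z}$, and the sparseness then bounds $\sum_{R\in\mathscr S_{\mathfrak z,j}}|R|\le \eta^{-1}j2^j$. The geometric series in $j$ converges precisely because $\beta>1$. Your scale-by-scale scheme would have to reproduce this weak-type input in some form; as written it does not.
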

We can deduce Theorem \ref{Thm Z} as follows.
If we assume that $ {\mathcal M}_{\frak z,d} $ admits an $(r,s)$ $\eta$-sparse domination with 
$1/r + 1/s > 1$, then for each $k \in \mathbb{N}$ we have
\begin{align*}
\langle {\mathcal M}_{\frak z,d} \, \mu_k \, , \, \nu_k \rangle \, \leq \, C 
\sum_{R\in \mathscr{S}_{\frak z}} \langle \mu_k \rangle_{R,r} \langle \nu_k \rangle_{R,s} |R| ,
\end{align*}
for $\mu_k$ and $\nu_k$ as in Theorem \ref{ZD}. But the latter forces $\eta = 0$ by (a) and (b) of the previous theorem, 
which leads to a contradiction. {Therefore, by using a limiting and approximation argument, 
we obtain a proof of Theorem \ref{Thm Z}.}
\subsection{Construction of special finite sums of point masses}
Now we shall give the explicit formulas of $\mu_k$ and $\nu_k$ in Theorem \ref{ZD}. 
For brevity we drop the subscript $k$ from $\mu_k$ and $\nu_k$. Also,
for our proof below we need one more auxiliary result.

The authors in \cite{BCOR} introduced a 
\emph{dyadic distance} function given by
\begin{align*}
d_{\mathscr{D}} (p,q) \vcentcolon = \inf \big\{ |R|^{1/2} \, : \, R \in \mathscr{D}^n \hbox{ and } p, \, q \in R \big\},
\end{align*}
for every pair of points $p$ and $q$ in the cube $[0,1)^n$. The function $d_{\mathscr{D}}$ turns out to be intuitive 
in terms of the geometry in the dyadic {size-parallel} rectangles setting. We note that this function does not satisfy 
the conditions of a true distance, as remarked in \cite{BCOR}, but nevertheless we shall refer to $d_{\mathscr{D}}$ as 
\emph{the dyadic distance} between two points in $[0,1)^n$. 

Next, associated with the \emph{Zygmund dilation structure}, 
let $ d_{\mathscr{D}_{\mathfrak{z}}} $ be the \emph{Zygmund dyadic distance} given by
\begin{align*}
d_{\mathscr{D}_{\mathfrak{z}}} (p,q) \vcentcolon = \inf \big\{ |R|^{1/2} \, : \, R \in \mathscr{D}_{\mathfrak{z}} \hbox{ and } p, \, q \in R \big\} ,
\end{align*}
for every pair of points $p$ and $q$ in the cube $[0,1)^3$. 
\begin{lem}\label{main lemma zygmund}
For every natural number $k$, set $m = k 2^{6k}$. Then there exist two sets of points $\mathcal P_{\mathfrak z}$ 
and $\mathcal Z_{\mathfrak z}$ contained in the cube $[0,1)^3$, linked closely to the Zygmund dilation previously introduced,
satisfying the following properties.
\begin{itemize}
\item[(a)] $\sharp \mathcal P_{\mathfrak z} = 2^{4m+1}$ and $\, \displaystyle d_{\mathscr{D}_{\mathfrak{z}}} (p,q) \geq  \frac{1}{2^{2m}}$ 
for every pair of points $p, q \in \mathcal P_{\mathfrak z}$.
\item[(b)] $\sharp\mathcal Z_{\mathfrak z} \geq C m2^{4m} $.
\item[(c)] For each $z\in \mathcal Z_{\mathfrak z}$ there is exactly one point $p\in\mathcal P_{\mathfrak z}$ such that
$\, \displaystyle d_{\mathscr{D}_{\mathfrak{z}}} (p,z) = \frac{C}{2^{2m+k}}$.
\item[(d)] Let $R_\mathfrak{z}$ be a Zygmund dyadic rectangle and let 
$ R_0 = R_\mathfrak{z} \cap [0,1)^3 $. Then we have\footnote{Note that $R_0$ is a dyadic rectangle in $ \mathscr{D}^3$ 
and is not necessarily a Zygmund dyadic rectangle.}
\begin{itemize}
\item[$(i)$] if $\displaystyle |R_0| \geq \frac{1}{2^{4m + 2}} $, then 
$ \, \sharp( R_{0} \cap \mathcal P_{\mathfrak z}) = \sharp \mathcal P_{\mathfrak z} \cdot |R_{0}| $ and 
$ \, \sharp( R_{0} \cap\mathcal Z_{\mathfrak z}) \leq C k m 2^{4m} |R_{0}| $;
\item[$(ii)$] if $\displaystyle |R_0| < \frac{1}{2^{4m + 2}} $ and $R_0$ contains at least one point of 
$ \mathcal P_{\mathfrak z} $ and one point of $ \mathcal Z_{\mathfrak z} $, then 
$\, \sharp( R_{0} \cap \mathcal P_{\mathfrak z}) \leq C 2^k $, $\, \sharp( R_{0} \cap\mathcal Z_{\mathfrak z}) \leq C k 2^{k} $
and $\, \displaystyle |R_0| \geq \frac{C}{(2^{2m + k})^2} $.
\end{itemize}
\end{itemize}
\end{lem}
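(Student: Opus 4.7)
The plan is to adapt the point-mass construction of Barron, Conde-Alonso, Ou and Rey \cite{BCOR} to the Zygmund geometry. The choice $m=k\cdot 2^{6k}$ provides two well-separated scales: a coarse scale $2^{-m}$ (respectively $2^{-2m}$ in the third coordinate) on which the points of $\mathcal{P}_{\mathfrak z}$ are placed, and a finer scale $2^{-(m+k)}$ (respectively $2^{-(2m+k)}$) on which the perturbation points of $\mathcal{Z}_{\mathfrak z}$ live. The structural input used throughout is the defining constraint $|S|=|I|\cdot|J|$ for a Zygmund dyadic rectangle $R=I\times J\times S$; this will force any Zygmund dyadic rectangle simultaneously trapping a $\mathcal{P}_{\mathfrak z}$-point and a nearby $\mathcal{Z}_{\mathfrak z}$-point to have volume bounded below by $\sim 2^{-(4m+2k)}$, which is the origin of the estimate in (d)(ii).

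First I would construct $\mathcal{P}_{\mathfrak z}$ as a ``generalized Zygmund diagonal'' in $[0,1)^3$. Writing $N=2^{2m}$, I place $2^{4m+1}$ points of the form $\bigl(i/N,\,j/N,\,\phi(i,j)/N^{2}\bigr)$ for $0\leq i,j<N$, with two points per horizontal grid location obtained from two suitably shifted choices of a map $\phi\colon\{0,\ldots,N-1\}^{2}\to\mathbb{Z}$ engineered so that the third coordinates are dispersed in a way compatible with the product structure of Zygmund rectangles. To verify (a) I would fix $p\neq q$ in $\mathcal{P}_{\mathfrak z}$ and any Zygmund dyadic $R=I\times J\times S$ containing both with $|I|=2^{-a}$, $|J|=2^{-b}$, $|S|=2^{-(a+b)}$, then perform a case analysis on whether $p$ and $q$ share the same $x_{1}$- or $x_{2}$-quantization; in each case the design of $\phi$ forces $a+b\leq 2m$, so $|R|\geq 2^{-4m}$ and hence $d_{\mathscr{D}_{\mathfrak z}}(p,q)\geq 2^{-2m}$.

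Next I would attach to each $p\in\mathcal{P}_{\mathfrak z}$ a cluster of $\sim km$ zoom points $z\in\mathcal{Z}_{\mathfrak z}$ sitting inside a Zygmund dyadic rectangle of side lengths $(2^{-(m+k)},2^{-(m+k)},2^{-(2m+2k)})$ centred at $p$, chosen so that each $z$ has Zygmund dyadic distance $\asymp 2^{-(2m+k)}$ from its ``parent'' $p$ and from no other point of $\mathcal{P}_{\mathfrak z}$; the latter uniqueness is inherited from the separation already proved in (a). Item (b) then follows from $\sharp\mathcal{P}_{\mathfrak z}=2^{4m+1}$ multiplied by the cluster size $\sim km$, and (c) is immediate from the construction. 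The factor $km$ arises because there are $(2m+1)$ distinct shapes of Zygmund dyadic rectangles of volume $2^{-(4m+2k)}$ surrounding $p$, of which one selects $\sim k$ admissible shapes with $\sim m$ translates each.

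The main obstacle, as is typical for such arguments, is the verification of (d). Part (d)(i) is the easier regime: when $|R_{0}|\geq 2^{-(4m+2)}$, the scale of $R_{0}$ is coarser than the separation scale of $\mathcal{P}_{\mathfrak z}$, so equidistribution of the lattice yields $\sharp(R_0\cap\mathcal{P}_{\mathfrak z})=\sharp\mathcal{P}_{\mathfrak z}\cdot|R_0|$, and the $\mathcal{Z}_{\mathfrak z}$-count follows after multiplying by the cluster size. The hard step is (d)(ii): given $|R_{0}|<2^{-(4m+2)}$ containing at least one point of each type, I would exhaust the $(2m+1)$ possible Zygmund shapes at this volume scale and use the explicit formulas for $\phi$ and for the clusters to see that any such $R_{0}$ can contain at most $C\cdot 2^{k}$ points of $\mathcal{P}_{\mathfrak z}$ and at most $C\cdot k\,2^{k}$ points of $\mathcal{Z}_{\mathfrak z}$, while the simultaneous presence of a $\mathcal{P}$-point and a $\mathcal{Z}$-point at Zygmund distance $\asymp 2^{-(2m+k)}$ coming from (c) lower-bounds $|R_{0}|$ by $C/2^{4m+2k}$, completing the proof.
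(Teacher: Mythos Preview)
Your sketch proposes to build $\mathcal P_{\mathfrak z}$ and $\mathcal Z_{\mathfrak z}$ from scratch in three dimensions, hinging everything on a map $\phi\colon\{0,\dots,N-1\}^2\to\mathbb Z$ that is supposed to disperse the third coordinates appropriately. This is a genuine gap: you never construct $\phi$, and the properties you need from it---the exact equidistribution $\sharp(R_0\cap\mathcal P_{\mathfrak z})=\sharp\mathcal P_{\mathfrak z}\cdot|R_0|$ in (d)(i), and the case analysis in (d)(ii)---are precisely the hard content of the lemma. In the two-dimensional BCOR construction the analogous combinatorics is delicate and occupies most of their argument; asserting that a three-variable version with the required Zygmund-compatible properties exists is not a proof. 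A secondary issue: your uniqueness claim in (c), ``inherited from the separation already proved in (a)'', implicitly invokes a triangle inequality for $d_{\mathscr D_{\mathfrak z}}$, but this function is explicitly noted not to be a metric.

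The paper avoids all of this by a much simpler reduction. One takes the two-dimensional BCOR sets $\mathcal P,\mathcal Z\subset[0,1)^2$ as a black box and defines
\[
\mathcal P_{\mathfrak z}:=\bigcup_{j=0}^{2^{2m}-1}\mathcal P\times\{j\,2^{-2m}\},\qquad
\mathcal Z_{\mathfrak z}:=\bigcup_{j=0}^{2^{2m}-1}\mathcal Z\times\{j\,2^{-2m}\}.
\]
Parts (a)--(c) and (d)(i) then follow immediately from the corresponding BCOR properties, since the floors are disjoint and each carries an identical copy of $\mathcal P$ (resp.\ $\mathcal Z$). For (d)(ii) the only new observation is that for $R_0=I\times J\times S=R_{\mathfrak z}\cap[0,1)^3$ one has $|I\times J|\le |S|$ (because $R_{\mathfrak z}$ is Zygmund and $S$ is the height of $R_{\mathfrak z}$ truncated at $1$), so $|R_0|<2^{-(4m+2)}$ forces $|I\times J|<2^{-(2m+1)}$. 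BCOR's two-dimensional estimates applied to the projection $I\times J$ then give $|I\times J|\asymp 2^{-(2m+k)}$, whence $|R_0|\ge C\,2^{-2(2m+k)}$ and $|S|<C\,2^{k}\cdot 2^{-4m}$; the point counts $\sharp(R_0\cap\mathcal P_{\mathfrak z})\le C\,2^k$ and $\sharp(R_0\cap\mathcal Z_{\mathfrak z})\le C\,k\,2^k$ follow by multiplying the per-floor BCOR bounds by the number of floors meeting $S$. The moral is that the Zygmund constraint $|S|=|I|\cdot|J|$ slaves the third coordinate to the first two, so no genuinely three-dimensional combinatorics is needed.
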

See Figure \ref{fig1} for a schematic diagram of $\mathcal P_{\mathfrak z}$.
 \begin{figure}[h]
 \centering
 \includegraphics[width=0.9\textwidth]{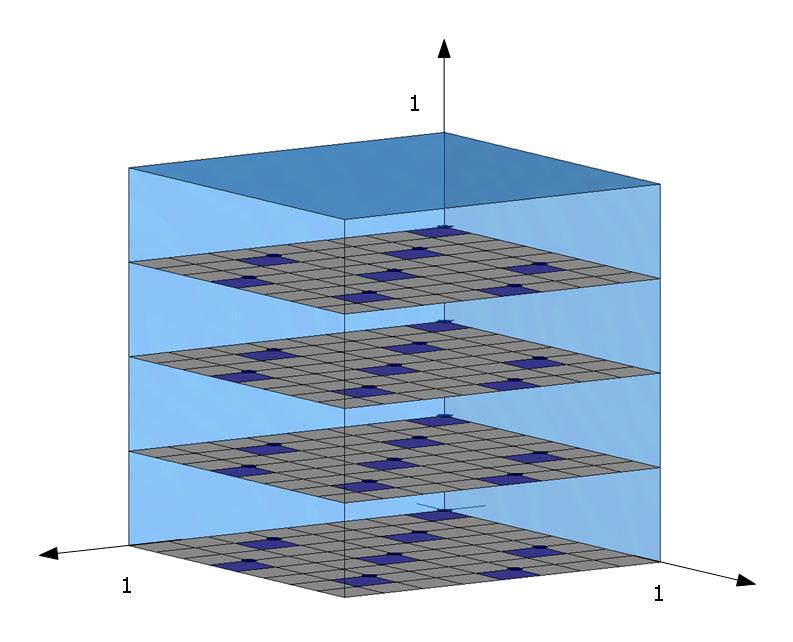}
 \caption{Schematic diagram indicating one of the examples of the locations of the points in $\mathcal P_{\mathfrak z}$, 
 which lie in identical copies of the intersection $\mathcal P$ with the $xy-$plane. These copies are placed at discrete 
 equally spaced heights, as if laid out on the floors of a multi-storey building.}\label{fig1}
 \end{figure}
 
In order to prove that there is no sparse 
domination in the tensor product setting $\mathbb R^n\times \mathbb R^m$ for the strong dyadic maximal function, the authors in \cite{BCOR} introduced two fundamental sets $\mathcal P$ and $\mathcal Z$ in $\mathbb R^2$ (see \cite[Theorem 2.1]{BCOR} for $\mathcal{P}$ and \cite[Theorem 2.2]{BCOR} for 
$\mathcal{Z}$).
 We have then made a construction adapted to the \emph{Zygmund dilatation} 
structure from these ones. 
In order to prove Lemma \ref{main lemma zygmund} and for the convenience of the reader, 
we state next the previously mentioned theorems.
\begin{thm}[Theorem 2.1, Theorem 2.2 and Remark 2.3 of \cite{BCOR}]
For every natural number $m$ and every natural number $k \ll m$ there exist two sets of points $\mathcal P$ and $\mathcal Z$
contained in $[0,1)^2$ satisfying the following properties.
\item[(a)] $\sharp \mathcal P = 2^{2m+1}$ and $\, \displaystyle d_{\mathscr{D}} (p,q) \geq \frac{1}{2^{m}}$ 
for every pair of points $p, q \in \mathcal P$.
\item[(b)] $\sharp\mathcal Z \geq C m2^{2m} $.
\item[(c)] For each $z\in \mathcal Z$ there is exactly one point $p \in \mathcal P$ such that
$\, \displaystyle \big( d_{\mathscr{D}} (p,z) \big)^2 = \frac{C}{2^{2m+k}}$.
\item[(d)] Let $R$ be a dyadic rectangle in $\mathscr{D}^2$. Then
\begin{itemize}
\item[$(i)$] if $\displaystyle |R| \geq \frac{1}{2^{2m+1}}$, we have $\, \sharp (R \cap \mathcal{P}) = \sharp P \cdot |R| $
and $\, \sharp (R \cap \mathcal{Z}) \leq C k m 2^{2m} |R|$;
\item[$(ii)$] if $\displaystyle |R| < \frac{1}{2^{2m+1}}$ and $R$ contains one point of $ \mathcal P $, we have 
$\, \sharp (R \cap \mathcal{Z}) \leq C k$.
\end{itemize}
The implied constants are independent of $k$ and $m$.
\end{thm}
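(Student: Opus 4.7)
The plan is to construct $\mathcal{P}$ as a two-dimensional Hammersley digital net in base $2$ of cardinality $2^{2m+1}$, and to obtain $\mathcal{Z}$ by applying explicit single-bit perturbations to the points of $\mathcal{P}$. Concretely, for each integer $\ell\in\{0,1,\ldots,2^{2m+1}-1\}$ with binary expansion $\ell=\sum_{s=0}^{2m}\ell_s\, 2^s$, I would set
\[
p_\ell \;=\; \Big(\ell\cdot 2^{-(2m+1)},\ \sum_{s=0}^{2m}\ell_s\, 2^{-(s+1)}\Big),
\]
the first coordinate being the natural dyadic enumeration and the second the van der Corput radical inverse. Because the binary digits of the two coordinates appear in reversed order, a direct bit-counting argument shows that for distinct $\ell,\ell'$ the dyadic divergence depths $d_x,d_y$ of the two coordinates of $(p_\ell,p_{\ell'})$ satisfy $d_x+d_y\leq 2m$, which gives $d_{\mathscr{D}}(p_\ell,p_{\ell'})\geq 2^{-m}$ and proves (a). The same bit-counting yields the $(0,2m+1,2)$-net property --- every dyadic rectangle of volume $2^{-(2m+1)}$ contains exactly one $p_\ell$ --- which by summation is the equality $\sharp(R\cap\mathcal{P})=\sharp\mathcal{P}\cdot|R|$ of (d)(i).

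For $\mathcal{Z}$, for each $p_\ell\in\mathcal{P}$ and each pair of nonnegative integers $(i,j)$ with $i+j=2m+k$ (of which there are at least $cm$ valid splits for $k\ll m$), I would let $z_{\ell,i,j}$ be the point obtained from $p_\ell$ by flipping the $(i+1)$-th binary digit of the first coordinate and the $(j+1)$-th binary digit of the second, keeping all further digits of $p_\ell$ unchanged. Setting $\mathcal{Z}:=\{z_{\ell,i,j}\}$ gives $\sharp\mathcal{Z}\geq Cm\, 2^{2m}$, which is (b). By construction $d_x(p_\ell,z_{\ell,i,j})=i$ and $d_y(p_\ell,z_{\ell,i,j})=j$, so $d_{\mathscr{D}}(p_\ell,z)^2=2^{-(2m+k)}$, which is the existence half of (c).

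The main obstacle is the \emph{uniqueness} assertion in (c). The key input is the ultrametric structure of dyadic divergence depths in each coordinate: for any three points $p,q,r$ one has $d_x(p,r)\geq\min(d_x(p,q),d_x(q,r))$, with equality unless the two right-hand terms are equal. Suppose $p_{\ell'}\neq p_\ell$ also satisfies $d_x(p_{\ell'},z)+d_y(p_{\ell'},z)=2m+k$, and set $a=d_x(p_\ell,p_{\ell'})$, $b=d_y(p_\ell,p_{\ell'})$, so that $a+b\leq 2m$ by (a). The ultrametric property forces $d_x(p_{\ell'},z)=\min(a,i)$ when $a\neq i$, and similarly in the $y$-coordinate. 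A short case analysis on the signs of $a-i$ and $b-j$ then shows that the only combination that could yield sum $=2m+k$ is $a>i$ and $b>j$; but this is impossible since it would force $a+b>i+j=2m+k>2m$. The borderline cases $a=i$ or $b=j$ give $d_x(p_{\ell'},z)\geq i+1$ (resp.\ $d_y(p_{\ell'},z)\geq j+1$), hence sum $>2m+k$, after excluding a small bounded set of splits $(i,j)$ that can be dropped while preserving the lower bound on $\sharp\mathcal{Z}$.

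Once uniqueness is established the remaining counts are routine. Property (d)(ii) is immediate: if $|R|<2^{-(2m+1)}$ and $R$ contains the single point $p_\ell$, then by (c) every $z\in R\cap\mathcal{Z}$ is one of the $O(k)$ perturbations of $p_\ell$ whose perturbation sizes $2^{-(i+1)}$, $2^{-(j+1)}$ are small enough to keep $z$ inside $R$. The second half of (d)(i) follows by summing the shifted-net counts $\sharp(R\cap\{z_{\ell,i,j}\}_\ell)\lesssim 2^{2m}|R|$ over the $O(m)$ splits $(i,j)$, picking up the factor of $k$ from perturbation-induced boundary effects across $\partial R$.
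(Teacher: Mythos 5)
Your Hammersley--van-der-Corput construction of $\mathcal{P}$ is sound and does give (a) and the first half of (d)(i). The difficulties are all in $\mathcal{Z}$, and there is a genuine gap.

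The core problem is that for $i+1\leq 2m+1$ (which is the whole useful range $i\in[k,2m]$), flipping bit $i+1$ of the first coordinate of $p_\ell$ produces a number that is still a multiple of $2^{-(2m+1)}$, hence coincides \emph{exactly} with the first coordinate of a different net point $p_{\ell''}$. Concretely, for each $i\in[k,2m-1]$ let $\ell''$ be the index whose first coordinate is that of $p_\ell$ with bit $i+1$ flipped; then $z_{\ell'',i,2m+k-i}\in\mathcal{Z}$ has first coordinate identical to that of $p_\ell$, and its second coordinate differs from that of $p_\ell$ only at positions $2m-i+1$ and $2m+k-i+1$, both $\geq 2$. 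Taking $R=I\times J$ with $I$ the length-$2^{-(2m+1)}$ dyadic interval containing $p_\ell$'s first coordinate and $|J|=\tfrac12$, one gets $|R|=2^{-(2m+2)}<2^{-(2m+1)}$, $p_\ell\in R$, and roughly $2m-k$ of these $z$'s lie in $R$. That violates the bound $\sharp(R\cap\mathcal{Z})\leq Ck$ of (d)(ii) by a factor of order $m/k$. This also exposes the logical gap in your argument for (d)(ii): you assume every $z\in R\cap\mathcal{Z}$ is one of the perturbations of the unique $p_\ell\in R$, but that is not what (c) gives, and in your construction it is simply false --- a $z$ built from a $p_{\ell''}\notin R$ can and does sit inside $R$ because it shares a full coordinate line with $p_\ell$.

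The uniqueness argument for (c) is also incomplete in the borderline case $a=d_x(p_\ell,p_{\ell'})=i$. There you only get $d_x(p_{\ell'},z)\geq i+1$; it can be much larger (in the extreme case above it is $+\infty$), and combined with $d_y(p_{\ell'},z)=b<j$ it can realize $d_x+d_y=2m+k$. Excluding ``a small bounded set of splits $(i,j)$'' does not fix this, since the failure depends on which $p_{\ell'}$ is near $z$ and not on the split. For comparison, the source paper \cite{BCOR} (as summarized in Section 3 of the present article) does not use a digit-flip of the net points: it first fixes dyadic rectangles of measure $2^{-2m-2}$ and selects special points inside those rectangles to form $\mathcal{Z}$, and then proves $\sharp(R\cap\mathcal{Z})\leq Ck$ by a separate lemma. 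A workable fix in your framework would require moving $z$ off the grid $2^{-(2m+1)}\mathbb{Z}^2$ (for instance by also setting a digit at depth $2m+2$ in each coordinate), but then both (c) and (d) must be re-proved from scratch; as written, the construction fails (d)(ii).
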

See Figure \ref{fig2} and \ref{fig3} below for schematic diagrams of $\mathcal P$. See Figure \ref{figZ} next 
section for schematic diagrams of $\mathcal Z$.\\
\begin{figure}[h]
  \centering
  \begin{subfigure}[b]{0.4\linewidth}
    \includegraphics[width=\linewidth]{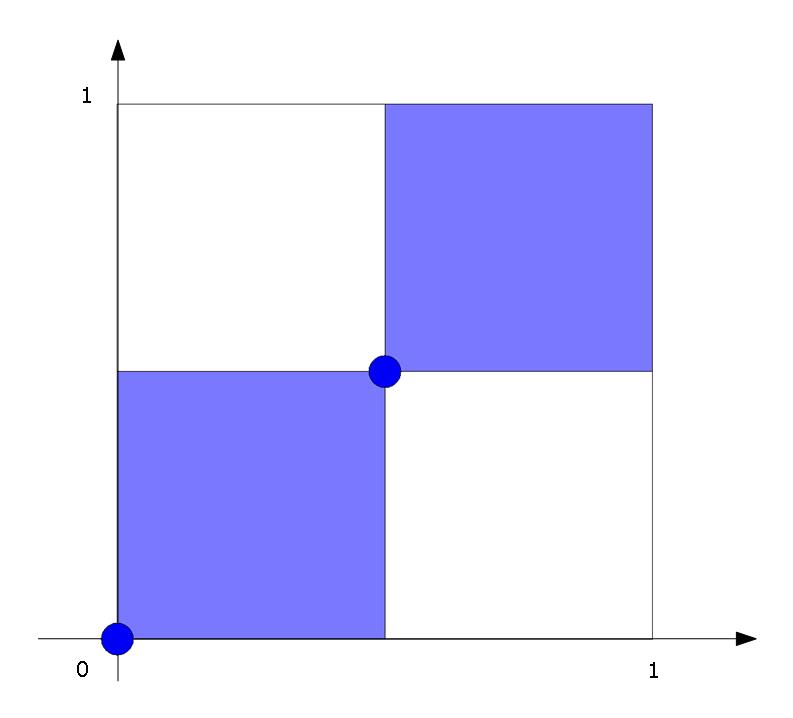} 
  \end{subfigure}  \qquad \qquad
  \begin{subfigure}[b]{0.4\linewidth}
    \includegraphics[width=\linewidth]{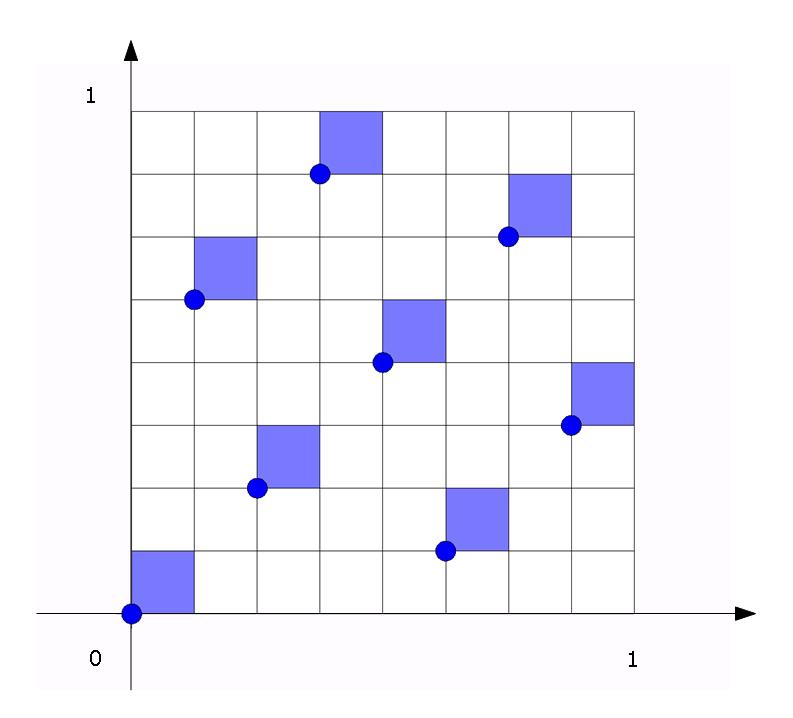}
  \end{subfigure}
 \caption{Schematic diagram indicating one of the examples of the locations of the points in $\mathcal P$ 
 for $m=0$ and $m=1$.}\label{fig2}
\end{figure} 
 \begin{figure}[h]
 \centering
 \includegraphics[width=0.6\textwidth]{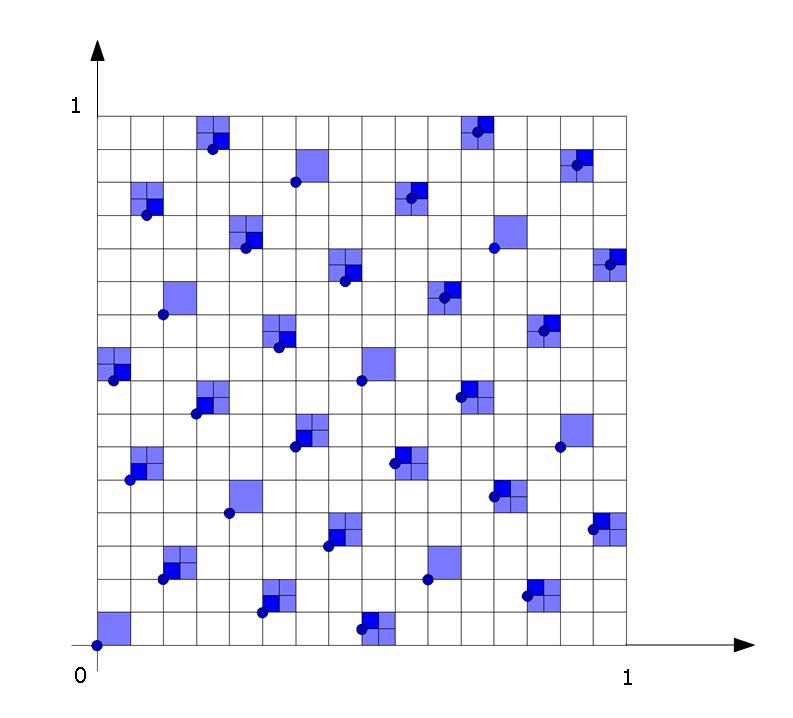}
 \caption{Schematic diagram indicating one of the examples of the locations of the points in $\mathcal P$
 for $m=2$.}\label{fig3}
 \end{figure}
\begin{proof}[Proof of Lemma \ref{main lemma zygmund}]
Let $ \mathcal P_{\mathfrak z} $ be the union of the level sets $ \mathcal P \times \{j \, 2^{-2m} \} $ for 
$j = 0, 1, \ldots, 2^{2m} -1$ and similarly for $\mathcal Z_{\mathfrak z}$. In particular, the $j$th level set $ \mathcal P \times \{j \, 2^{-2m} \} $ lies on the $j$th floor in Figure 1.
Thus, the items (a), (b) and (c) 
are an immediate consequence of the properties (a) for $\mathcal P$, (b)-(c) for $\mathcal Z$ and 
that the height of the Zygmund dyadic rectangles strictly contained in $[0,1)^3$ is low enough, see Figure \ref{figZygRec} below for indications of the Zygmund dyadic rectangles. 

 \begin{figure}[h]
 \centering
 \includegraphics[width=0.8\textwidth]{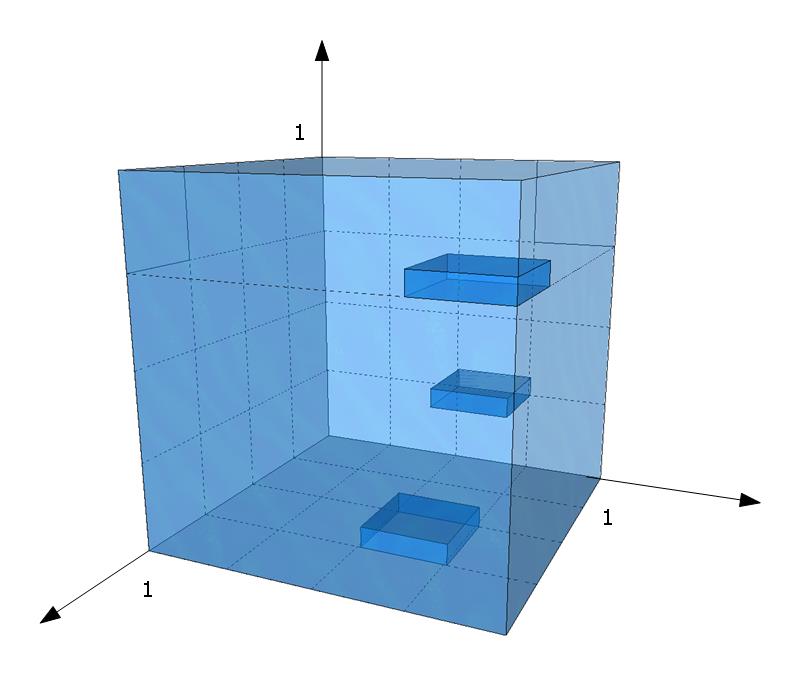}
 \caption{Schematic diagram indicating some examples of the Zygmund dyadic rectangles in each layer.}\label{figZygRec}
 \end{figure}

A point count and a pigeonholing argument allow us to obtain the item (d)-$(i)$, as noted in {Remark 2.3 of} \cite{BCOR}.

Now, let $R_0$ be as in (d)-$(ii)$ and set $R_0 = I \times J \times S$. Then $ |I \times J| \leq |S| $ because 
$R_{\mathfrak z}$ is a Zygmund dyadic rectangle and $S$ is the minimum between 1 and the height of $R_{\mathfrak z}$. 
Furthermore $ |I \times J| < 1 / 2^{2m+1} $ (otherwise we have a contradiction with $ |R_0| < 1 / 2^{4m + 2}$).
Also, set $p \in R_0 \cap \mathcal P_{\mathfrak z}$ and $z \in R_0 \cap \mathcal Z_{\mathfrak z}$. Without loss of generality
we can suppose that $p$ and $z$ have the same height, from the definitions of $ \mathcal P_{\mathfrak z} $ and 
$ \mathcal Z_{\mathfrak z} $. Then, $ |I \times J| = C / 2^{2m+k} $ by properties (a) for $\mathcal P$ and 
(c) for $\mathcal Z$. Therefore 
\begin{align*}
|R_0| \geq \frac{C}{(2^{2m + k})^2} \qquad \hbox{and} \qquad |S| < \frac{C 2^k}{2^{4m}}. 
\end{align*}
Finally, by property (d)-$(ii)$ for $ \mathcal Z $ we have that
\begin{align*}
 \sharp( R_{0} \cap \mathcal P_{\mathfrak z}) \leq C 2^k 
 \qquad \hbox{and} \qquad \sharp( R_{0} \cap\mathcal Z_{\mathfrak z}) \leq C k 2^{k} .
 \end{align*}
 The proof of Lemma \ref{main lemma zygmund} is complete.
\end{proof}
\begin{proof}[Proof of Theorem \ref{ZD}] Let $\mu$ and $\nu$ be the finite sums of point masses associated 
with $ \mathcal{P}_{\mathfrak z} $ and $ \mathcal{Z}_{\mathfrak z} $ of Lemma \ref{main lemma zygmund}, respectively.  Again, we have dropped the subscript $k$.

For each $z \in \mathcal Z_{\mathfrak z}$ there is only one point $p\in\mathcal P_{\mathfrak z}$ such that 
$\displaystyle d_{\mathscr{D}_{\mathfrak{z}}} (p,z) = C / 2^{2m+k}$, by Lemma \ref{main lemma zygmund} (c). So
there is a Zygmund dyadic rectangle $R_\mathfrak{z}$ containing both $p$ and $z$ such that 
$|R_\mathfrak{z}| = C / (2^{2m+k})^2$. Then, from the definition of $\mu$, ${\mathcal M}_{\frak z,d} \, \mu \, (z)$,  and 
by Lemma \ref{main lemma zygmund} (a), we have
\begin{align*}
{\mathcal M}_{\frak z,d} \, \mu (z) \geq \frac{\mu(R_\mathfrak{z})}{|R_\mathfrak{z}|} = { C (2^{2m+k})^2 \over 2^{4m+1}} = C \, 2^{2k} .
\end{align*}
Therefore,  from the definition of $\nu$,
\begin{align*}
\langle {\mathcal M}_{\frak z,d} \, \mu \, , \, \nu \rangle = 
\frac{1}{\sharp \mathcal{Z}_{\mathfrak z}} \sum_{z \in \mathcal Z_{\mathfrak z}} {\mathcal M}_{\frak z,d} \, \mu (z)
\geq  C \, 2^{2k} 
\end{align*}
and hence Theorem \ref{ZD} (a) is proved. 

We next show (b) of Theorem \ref{ZD}. Let  $R$ 
be a Zygmund dyadic rectangle in $\mathcal S_{\mathfrak z}$ and 
let $ R_0 = R \cap [0,1)^3 $. 
Suppose now that $R_{0} = I \times J \times S$, so we shall consider the two cases 
corresponding to the items $(i)$ and $(ii)$ of Lemma \ref{main lemma zygmund} (d).

First, suppose that $\displaystyle |R_0| \geq \frac{1}{2^{4m + 2}} $. Then by (d)-$(i)$ and (a) of Lemma \ref{main lemma zygmund}, 
we have
\begin{align*}
\langle \mu \rangle_{R,r} = \frac{1}{\sharp \mathcal P_{\mathfrak z}} \bigg(  { \sharp ( R_0 \cap P_{\mathfrak z}) \over |R|} \bigg)^{1/r}
= \frac{1}{ 2^{(4m+1)(1 - 1/r)} } \bigg( { | R_0|\over |R| } \bigg)^{1/r} \leq \bigg( { | R_0|\over |R| } \bigg)^{1/r} .
\end{align*}
Next, by (d)-$(i)$ and (b) of Lemma \ref{main lemma zygmund}, we have
\begin{align*}
\langle \nu \rangle_{R,s} = \frac{1}{\sharp \mathcal Z_{\mathfrak z}} \bigg( { \sharp (R_0 \cap \mathcal Z_{\mathfrak z}) \over |R|} \bigg)^{1/s}
\leq \frac{C (km2^{4m})^{1/s} }{m2^{4m}} \bigg( { | R_0|\over |R| } \bigg)^{1/s} \leq C k^{1/s} \bigg( { | R_0|\over |R| } \bigg)^{1/s} .
\end{align*}
Thus, we get
\begin{align} \label{case-1}
\langle \mu \rangle_{R,r} \langle \nu \rangle_{R,s} \leq C k^{1/s} \bigg( { | R_0|\over |R| } \bigg)^{1/r  \,+ \, 1/s} .
\end{align}

Now, suppose that $\displaystyle |R_0| < \frac{1}{2^{4m + 2}} $ and 
$ \langle \mu \rangle_{R,r} \langle \nu \rangle_{R,s} > 0 $ (note that the cases 
$ \langle \mu \rangle_{R,r} \langle \nu \rangle_{R,s} = 0 $ contribute nothing to the sum on the
left-hand side of the inequality of Theorem \ref{ZD} (b)). So, $R_0$ contains at least one point of 
$ \mathcal P_{\mathfrak z} $ and one point of $ \mathcal Z_{\mathfrak z} $. Then by (d)-$(ii)$ and (a) of Lemma \ref{main lemma zygmund}, 
we have
\begin{align*}
\langle \mu \rangle_{R_0 , r} = \frac{1}{\sharp \mathcal P_{\mathfrak z}} \bigg(  { \sharp ( R_0 \cap P_{\mathfrak z}) \over |R_0|} \bigg)^{1/r}
\leq \frac{ C 2^{k/r} (2^{2m+k})^{2/r} }{2^{4m+1} } \leq C \, 2^{3k/r} .
\end{align*}
Also, by (d)-$(ii)$ and (b) of  Lemma \ref{main lemma zygmund}, we have
\begin{align*}
\langle \nu \rangle_{R_0 , s} = \frac{1}{\sharp \mathcal Z_{\mathfrak z}} \bigg( { \sharp (R_0 \cap \mathcal Z_{\mathfrak z}) \over |R_0|} \bigg)^{1/s} 
\leq \frac{C (k 2^k)^{1/s} (2^{2m+k})^{2/s} }{ m 2^{4m} } \leq \frac{C k^{1/s} \, 2^{3k/s} }{m} .
\end{align*}
As a consequence, we get that
\begin{align} \label{case-2}
\langle \mu \rangle_{R , r} \langle \nu \rangle_{R , s} = 
\langle \mu \rangle_{R_0 , r} \langle \nu \rangle_{R_0 , s} \bigg( {|R_0|\over |R|}\bigg)^{1/r \, + \, 1/s}
\leq \frac{C \, k^{1/s} \, 2^{3k(1/r \, + \, 1/s)}}{m} \bigg( {|R_0|\over |R|}\bigg)^{1/r \, + \, 1/s}.
\end{align}
Since $m = k 2^{6k}$, by \eqref{case-1} and \eqref{case-2} we have
\begin{align} \label{final}
\langle \mu \rangle_{R , r} \langle \nu \rangle_{R , s} \leq C \, k^{1/s} \left( 1 + \frac{1}{k} \right) \bigg( {|R_0|\over |R|}\bigg)^{1/r \, + \, 1/s}.
\end{align}   

We next split $\mathscr{S}_{\frak z}$ into the disjoint union of the subcollections
\begin{align*}
\mathscr{S}_{\frak z,j} = \big\{ R\in \mathscr{S}_{\frak z} : 2^{-j-1} |R| \leq |R_0| < 2^{-j} |R| \big\} 
\end{align*}
for $j = 0,1,\ldots$ We note that each of the rectangles $R$ in $\mathscr{S}_{\frak z,j}$
is contained in 
\begin{align*}
\Omega_j \vcentcolon = \bigg\{ z\in\mathbb R^3: \mathcal{M}_{\frak z,d} (\charac_{[0,1)^3}) (z)> {1\over 2^j} \bigg\}.
\end{align*}
The weak-type estimate of the Zygmund maximal function ($L\log L \to L^{1,\infty}$, see \cite{Co}) gives
$ |\Omega_j| \leq C \, j 2^j $. Finally, by \eqref{final} and $1/r + 1/s > 1$ we have
\begin{align*}
\sum_{R\in \mathscr{S}_{\frak z}} \langle \mu \rangle_{R,r} \langle \nu \rangle_{R,s} |R|
&=\sum_{j=0}^\infty\sum_{R\in \mathscr{S}_{\frak z,j}} \langle \mu \rangle_{R,r} \langle \nu \rangle_{R,s} |R|\\
&\leq C \, k^{1/s} \left( 1 + \frac{1}{k} \right) \sum_{j=0}^\infty \frac{1}{2^{j (1/r \, + \, 1/s)}} \sum_{R\in \mathscr{S}_{\frak z,j}} |R|\\
&\leq C \, k^{1/s} \left( 1 + \frac{1}{k} \right)  \sum_{j=0}^\infty \frac{|\Omega_j|}{\eta \, 2^{j (1/r \, + \, 1/s)}} \\
&\leq \frac{C}{\eta} \, k^{1/s} \left( 1 + \frac{1}{k} \right) \sum_{j=0}^\infty \frac{j 2^j}{ 2^{j (1/r \, + \, 1/s)}} \\
&\leq \frac{C}{\eta} \, k^{1/s} \left( 1 + \frac{1}{k} \right) ,
\end{align*}
as required.

The proof of Theorem \ref{ZD}  is complete.
\end{proof}

\section{Flag {dyadic} structure} \label{Flag}
Now we shall make some observations about the construction of the sets $\mathcal{P}$ and $\mathcal{Z}$
in \cite{BCOR}, which, together with the appropriate modifications regarding the exponents $r$ and $s$, will lead us 
to an immediate proof of Theorem \ref{Thm F}.

The construction of the set $\mathcal{P}$ is based on dyadic cubes in the plane and is compatible with the 
flag dyadic structure considered in this paper. So, we pick up the finite sum of point masses $\mu$ 
associated with this same $\mathcal{P}$. 

In order to get the set $\mathcal{Z}$ for fixed $k$ and $m$, the authors of \cite{BCOR} first consider dyadic rectangles $R$
of measure $2^{-2m-2}$. Then they choose special points of these rectangles to assemble $\mathcal{Z}$ 
(see Lemma 3.2 in \cite{BCOR}) and then prove that $ \sharp (R \cap \mathcal{Z}) \leq C k $ (see Lemma 3.3 in \cite{BCOR}). 
 \begin{figure}[h]
 \centering
 \includegraphics[width=0.5\textwidth]{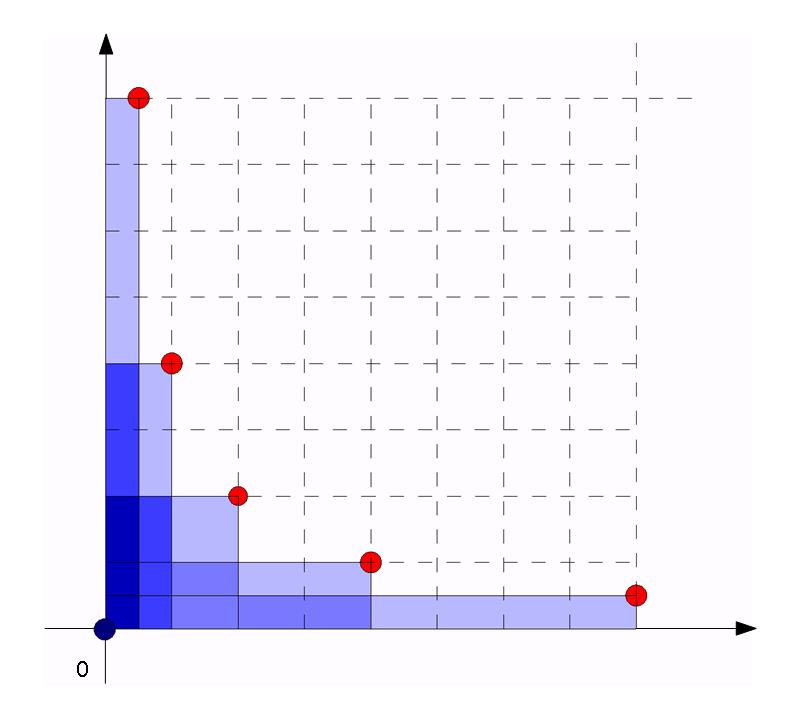}
 \caption{Schematic diagram indicating possible examples of the locations 
 of the points in $\mathcal Z$ fixed a point in $\mathcal P$.}\label{figZ}
 \end{figure}
Now, we can keep only those rectangles $R$ compatible with the flag dyadic structure considered here and
so we can build the set $\mathcal{Z}_{\rm flag}$ with the obvious modifications on the constants involved. 
After that, we take the finite sum of point masses $\nu$ associated with this new set $\mathcal{Z}_{\rm flag}$. 
Finally, the proof of Theorem \ref{Thm F} can be deduced following the steps previously carried out
for the dyadic maximal function $\mathcal{M}_{\mathfrak{z}, d}$ in Section \ref{Zygmund}.

\section{The strong {dyadic} maximal function and Sparse domination} \label{strong}
In this section, we  give a sketch of the proof of Theorem \ref{Thm ZS}. First, we modify
the proof of Proposition 2.6 in \cite{BCOR}, properly introducing the $L^r$-average and $L^s$-average
as we have done in the proof of Theorem \ref{ZD} (b). Then, following the procedure of 
the proof of Theorem \ref{Thm Z}, one can conclude Theorem \ref{Thm ZS} in the biparameter setting. 

Next, we modify the proof of Theorem 4.5 in \cite{BCOR}, properly introducing again the $L^r$-average and $L^s$-average. 
Then, using the previous step, one can conclude Theorem \ref{Thm ZS} for the full multiparameter setting.

\bigskip
{\bf Acknowledgement}
{The authors} would like to thank Jill Pipher, Guillermo Rey,  and Yumeng Ou for introducing us to the paper \cite{BCOR} and for many helpful discussions.

\bigskip

\end{document}